%\documentclass[14pt,twoside,reqnol]{ams}
%\allowdisplaybreaks
 \documentclass[12pt,twoside]{amsart}

 \usepackage{graphicx}
 \textwidth 15.5 cm
 \textheight 22.3 cm
 \oddsidemargin 0.5 cm
 \evensidemargin 0.5 cm
 \setcounter{page}{1}
 \newtheorem{theorem}{\bf Theorem}
 \newtheorem{definition}{\bf Definition}
 \newtheorem{note}{\bf Note}

 \newtheorem{property}{\bf Property}
 \newtheorem{remark}{\bf Remark}

\begin{document}
\title{A Note on Fuzzy Real and Complex Field}
\author{T. K. Samanta }

 \maketitle

\textit{Department of Mathematics, Uluberia College, Uluberia,
Howrah - 711315, India. \\ E-mail : mumpu$_{-}$tapas5@yahoo.co.in;}
\bigskip
\medskip

\begin{abstract} Using the concept of fuzzy field, we have considered
the fuzzy field of real and complex numbers and thereafter we have
established a few standard results of real and complex numbers with
respect to a membership function.
\end{abstract}

\bigskip
\medskip
\textbf{Key Words : \;} Fuzzy number , Membership function , Fuzzy
Field .
\\\\

\textbf{ Introduction : \;}  The concepts of Fuzzy real and complex
numbers; fuzzy functions; limit, continuity, differentiability,
integrability of fuzzy functions; sequence of fuzzy numbers and
their convergence has been developed in different papers $[ \,1
\,-\, 5 \,]$. All these concepts has been developed without
considering fuzzy field of real or complex numbers. But, in crisp
system, considering the set of real or complex numbers as the field
of real or complex numbers, usual mathematical analysis has been
developed. In the paper $[\,2\,]$, S. Nanda, introduced the concept
of fuzzy field. Using this concept of fuzzy field, we have
considered the fuzzy field of real and complex numbers and
thereafter we have established a few standard results of real and
complex numbers with respect to a membership function. In
particular, if we consider the characteristic function instead of
membership function, our results will coincide with the usual
results of real and complex numbers. In the first section, we have
introduced the fuzzy field of real numbers \, $\mathbb{R_{ \,\mu} }$
, modulus of a member of \, $\mathbb{R_{ \,\mu} }$ , supremum ,
infimum of a subset of \, $\mathbb{R_{ \,\mu} }$ and a few
properties depending on these concepts. In section two, we have
introduced the concepts of sequence in \, $\mathbb{R_{ \,\mu} }$ ,
convergence and divergence of a sequence in \, $\mathbb{R_{ \,\mu}
}$. In section three , we have introduced the concept of fuzzy field
of complex numbers \, $\mathbb{C_{ \,\mu} }$ , complex conjugate,
modulus, argument of a member of \, $\mathbb{C_{ \,\mu} }$ ,
exponential function, logarithm function in \, $\mathbb{C_{ \,\mu}
}$ and a few properties depending on these concepts. Lastly we have
given a conclusion, where we have described our future research.
\bigskip
%%%%%%%%%%%%%%%%% Main Body %%%%%%%%%%%%%%%%%%%%%%%%%%%%%%%%%%%%%%%%%%%%%%%%
\bigskip
\begin{definition}
$[ \,2\, ] $ Let \, $X$ \, be a field and \, $F$ \, a fuzzy set in
\, $X$ \, with membership function \, $\mu_{\,F}$. \, Then \, $F$ \,
is a fuzzy field in \, $X$ \, iff. the following conditions are
satisfied $:$ \[ ( i ) \hspace{2.5cm} \mu_{\,F}\,(\, x \; + \; y \,)
\; \; \geq \; \; \min \, \{ \, \mu_{\,F}( \, x \, ) \; , \;
\mu_{\,F}( \, y \, ) \} \hspace{1.5cm} for \; \; all  \; \; x \; ,
\; y \; \, \varepsilon \; \, X \] \[ ( ii ) \hspace{2.5cm}
\mu_{\,F}\,(\, - \,x \,) \; \; \geq \; \; \mu_{\,F}( \, x \, )
\hspace{5.5cm} for \; \; all \; \; x \; \, \varepsilon \; \, X \] \[
( iii ) \hspace{2.5cm} \mu_{\,F}\,(\, x \; y \,) \; \; \geq \; \;
\min \, \{ \, \mu_{\,F}( \, x \, ) \; , \; \mu_{\,F}( \, y \, ) \}
\hspace{2.5cm} for \; \; all  \; \; x \; , \; y \; \, \varepsilon \;
\, X \] \[ ( iv ) \hspace{2.5cm} \mu_{\,F}\,(\, x^{\, - \,1} \,) \;
\; \geq \; \; \mu_{\,F}( \, x \, ) \hspace{5.5cm} for \; \; all \;
\; x \; \, \varepsilon \; \, X \] \[ ( v ) \hspace{2.5cm}
\mu_{\,F}\,(\,0\,) \; \; = \; \; 1 \; \; = \; \; \mu_{\,F}\,(\,1\,)
 \hspace{7.5cm} \]
\end{definition}
\medskip
Throughout our discussion , we will consider the field \, $X$ \, as
the field of real numbers \, $\mathbb{R}$ \, or as the field of
complex numbers \, $\mathbb{C}$ . Also, we consider \, $\mathbb{R_{
\,\mu} }$ \, as a fuzzy field of real numbers and \, $\mathbb{C_{
\,\mu} }$ \, as a fuzzy field of complex numbers and the members are
respectively called \, $\mu -$ fuzzy real numbers and $\mu -$ fuzzy
complex numbers.
\medskip
\begin{definition}
For \, $x_{\,1} \; , \; x_{\,2} \; \; \varepsilon \; \; \mathbb{R_{
\,\mu} }$ , \, $x_{\,1} \; \; \leq_{\,\mu} \; \; x_{\,2}$ \, if and
only if \, $x_{\,1} \, \mu\,(\,x_{\,1}\,) \; \; \leq \; \; x_{\,2}
\, \mu\,(\,x_{\,2}\,)$ . \, Similarly , we can define \, $x_{\,1} \;
\; <_{\,\mu} \; \; x_{\,2}$ , \, $x_{\,2} \; \; \geq_{\,\mu} \; \;
x_{\,1}$ , \, $x_{\,2} \; \; >_{\,\mu} \; \; x_{\,1}$ .
\end{definition}

\medskip
\begin{property}
Let \, $a \; , \; b \; \; \varepsilon \; \; \mathbb{R_{ \,\mu} }$ .
Then \, $( \, i \, ) \hspace{0.5cm} a \; \, \geq \; \, 0 \; \;
\Longrightarrow \; \; a \; \, \geq_{\,\mu} \; \, 0$ \, and \, $a \;
\, \leq \; \, 0 \; \; \Longrightarrow \; \; a \; \, \leq_{\,\mu} \;
\, 0$ , \; $( \, ii \, ) \hspace{0.5cm} 0 \; \, \leq_{\,\mu} \; \,
a\; \; \Longrightarrow \; \; - \, a \; \, \leq_{\,\mu} \; \, 0$ , \;
$( \, iii \, ) \hspace{0.5cm} 0 \; \, \leq_{\,\mu} \; \, a \; , \; 0
\; \, \leq_{\,\mu} \; \, b \; \; \Longrightarrow \; \;  0 \; \,
\leq_{\,\mu} \; \, a \; + \; b$ , \; $( \, iv \, ) \hspace{0.5cm} a
\; \, \leq_{\,\mu} \; \, 0 \; , \; b \; \, \leq_{\,\mu} \; \, 0 \;
\; \Longrightarrow \; \;  a \; + \; b \; \, \leq_{\,\mu} \; \, 0$ ,
\; $( \, v \, ) \hspace{0.5cm} 0 \; \, \leq_{\,\mu} \; \, a \; , \;
0 \; \, \leq_{\,\mu} \; \, b \; \; \Longrightarrow \; \;  0 \; \,
\leq_{\,\mu} \; \, a \, b$ , \; $( \, vi \, ) \hspace{0.5cm} a \; \,
\leq_{\,\mu} \; \, 0 \; , \; b \; \, \leq_{\,\mu} \; \, 0 \; \;
\Longrightarrow \; \;  0 \; \, \leq_{\,\mu} \; \, a \, b$ , \; $( \,
vii \, ) \hspace{0.5cm} 0 \; \, \leq_{\,\mu} \; \, a \; , \; b \; \,
\leq_{\,\mu} \; \, 0 \; \; \Longrightarrow \; \;  a \, b \; \,
\leq_{\,\mu} \; \, 0$ , \; $( \, viii \, ) \hspace{0.5cm} a \; \,
\neq \; \, 0 \;  \; \Longrightarrow \; \; 0 \; \, \leq_{\,\mu} \; \,
a^{\,2} $ .
\end{property}
\medskip
\begin{definition}
Let \, $a \; \; \varepsilon \; \; \mathbb{R_{ \,\mu} }$ . Then the
modulus of \, $a$ \, in \, $\mathbb{R_{ \,\mu} }$ \, is denoted by
\, $| \, a \, |_{\,\mu}$ \, and defined as \, $| \, a \, |_{\,\mu}
\; \, = \; \, | \, a \, | \, \mu\,(\,a\,) $ , \, where \, $| \, a \,
|$ \, denotes the usual modulus of \, $a \; \; \varepsilon \; \;
\mathbb{R }$ .
\end{definition}
\medskip
\begin{property}
$( \, i \, ) \hspace{1.5cm} | \, a \, |_{\,\mu} \; \, = \; \,
\left\{ {\,\,\begin{array}{*{20}c}
   { a\,\,\mu \,(\,a\,)\,\hspace{1.6cm} {\text{if      }} \hspace{0.5cm}a\;\, > \;\,0}  \\
   { 0\,\hspace{2.5cm} {\text{if      }} \hspace{0.5cm} a\;\, = \;\,0}  \\
   { - \,a\,\,\mu \,(\,a\,)\, \hspace{1.0cm} {\text{if}} \hspace{0.6cm}a\;\; < \;\;0}  \\

 \end{array} } \right.$

\end{property}

\begin{property}
$(\,ii\,) \hspace{0.5cm} |\, - a \,|_{\,\mu} \; \, = \; \,
|\,a\,|_{\,\mu}$ , \; $(\,iii\,) \hspace{0.5cm} \frac{|\,a \, b
\,|_{\, \mu}}{\mu (\,a \, b\,)} \; \, = \; \, \frac{|\,a \,|_{\,
\mu}}{\mu (\,a\,)} \; \frac{|\,b \,|_{\, \mu}}{\mu (\,b\,)}
\hspace{0.7cm} provided \; \; \mu (\,a\,) , \\ \mu (\,b\,) \,,\, \mu
(\,a\,b\,) \; \, \neq \; \, 0$ , \; $(\,iv\,) \hspace{0.5cm}
\frac{|\,a \;+\; b \,|_{\, \mu}}{\mu (\,a \;+\; b\,)} \; \, \leq \;
\, \frac{|\,a \,|_{\, \mu}}{\mu (\,a\,)} \;+\; \frac{|\,b \,|_{\,
\mu}}{\mu (\,b\,)} \hspace{0.7cm} provided \; \; \mu (\,a\,) \,,\,
\mu (\,b\,) \,,\, \mu (\,a \;+\; b\,) \; \, \neq \; \, 0$ , \;
$(\,v\,) \hspace{0.5cm} Let \, c \; > \; 0 , \; |\,a\,|_{\,\mu} \;
\, < \; \, c \; \; \Longrightarrow \; \; - \; \frac{c}{\mu (\,a\,)}
\; \, < \; \, a \; \, < \; \, \frac{c}{\mu (\,a\,)} \; \; , \; \;
\mu (\,a\,) \; \, \neq \;\, 0$ \; and \; $ |\,a\,| \; \, <_{\,\mu}
\; \, c \; \; \Longrightarrow \; \; - \; \frac{c \,\mu (\,c\,)}{\mu
(\,a\,)} \; \, < \; \, a \; \, < \; \, \frac{c\,\mu (\,c\,)}{\mu
(\,a\,)} \; \; , \; \; \mu (\,a\,) \; \, \neq \;\, 0$
\end{property}
\medskip
\begin{definition}
Let \, $A \; \, \subseteq \; \, \mathbb{R_{\,\mu}}$. \, $A$ is said
to be $\mu\,-$ bounded above in $\mathbb{R_{\,\mu}}$ if the set \,
$\{ \, x \, \mu (\,x\,) \; \, : \; \, x \; \, \varepsilon \; \,A \}$
\, is bounded above in \, $\mathbb{R}$. The \, $\mu\,-$ supremum of
$A$ is denoted by $sup_{\,\mu}\,A$ and defined by \, $sup_{\,\mu}\,A
\; \, = \; \, sup\;\{\, x\,\mu (\,x\,) \; \, : \; \, x \;\,
\varepsilon \;\, A \,\}$ .\\\\ Similarly , $A$ is said to be
$\mu\,-$ bounded below in $\mathbb{R_{\,\mu}}$ if the set \, $\{ \,
x \, \mu (\,x\,) \; \, : \; \, x \; \, \varepsilon \; \,A \}$ \, is
bounded below in \, $\mathbb{R}$. The \, $\mu\,-$ infimum of $A$ is
denoted by $inf_{\,\mu}\,A$ and defined by \, $inf_{\,\mu}\,A \; \,
= \; \, inf\;\{\, x\,\mu (\,x\,) \; \, : \; \, x \;\, \varepsilon
\;\, A \,\}$ . \\\\ A set $A \; \, \subseteq \; \,
\mathbb{R_{\,\mu}}$ is said to be $\mu\,-$ bounded if it is both
$\mu\,-$ bounded above and $\mu\,-$ bounded below.
\end{definition}
\medskip
\begin{theorem}
If $A \;\, \subseteq \;\, \mathbb{R}$ is bounded then it is $\mu\,-$
bounded.
\end{theorem}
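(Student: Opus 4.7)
The plan is to unfold both definitions and exploit the fact that a membership function takes values in $[0,1]$. Since $A \subseteq \mathbb{R}$ is assumed bounded in the classical sense, there exists $M > 0$ with $|x| \leq M$ for every $x \in A$. To establish $\mu$-boundedness, Definition~5 requires me to show that the set $\{x\,\mu(x) : x \in A\}$ is bounded above and below in $\mathbb{R}$.

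First, I would observe that, being a membership function, $\mu$ satisfies $0 \leq \mu(x) \leq 1$ for every $x \in \mathbb{R}$. Combining this with the classical bound gives
\[
|x\,\mu(x)| \;=\; |x|\,\mu(x) \;\leq\; |x| \;\leq\; M
\]
for all $x \in A$, so $-M \leq x\,\mu(x) \leq M$. This immediately shows that $\{x\,\mu(x) : x \in A\}$ is bounded above by $M$ and below by $-M$ in $\mathbb{R}$, which by Definition~5 means $A$ is both $\mu$-bounded above and $\mu$-bounded below, hence $\mu$-bounded.

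There is no real obstacle here; the result is essentially a one-line consequence of the range restriction $\mu(x) \in [0,1]$ built into the notion of a membership function. The only point that deserves care is making sure to derive \emph{both} an upper and a lower bound on $x\,\mu(x)$, since $\mu$-boundedness is a two-sided condition, but this is automatic from the absolute-value estimate above.
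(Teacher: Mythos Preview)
Your proof is correct and is precisely the argument that the paper has in mind: the paper's own proof consists of the single word ``Obvious,'' and your write-up simply makes explicit the one-line estimate $|x\,\mu(x)| \le |x| \le M$ that justifies this.
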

\medskip
\begin{proof}
Obvious.
\end{proof}

\textbf{Completeness property of} $\mathbb{R_{\,\mu}} \; :$ \\\\
\textbf{Statement I} $:$ Every non - empty \, $\mu\,-$ bounded above
subset of \, $\mathbb{R_{\,\mu}}$ has a \, $\mu\,-$ supremum. \\\\
\textbf{Statement II} $:$ Every non - empty \, $\mu\,-$ bounded
below subset of \, $\mathbb{R_{\,\mu}}$ has a \, $\mu\,-$ infimum.
\begin{theorem}
Suppose the \textbf{Statement I} holds and $A$ is a non $-$ empty
subset of $\mathbb{R_{\,\mu}}$, which is bounded below in
$\mathbb{R_{\,\mu}}$. Then $A$ has an $\mu\,-$ infimum .
\end{theorem}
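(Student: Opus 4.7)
The plan is to mimic the classical real-analysis derivation of the infimum principle from the supremum principle, transported into $\mathbb{R}_{\,\mu}$ by negation. Given $A$ non-empty and $\mu$-bounded below, I would introduce the reflected set $-A \; = \; \{\,-\,x \; : \; x \; \varepsilon \; A\,\}$, verify that $-A$ is $\mu$-bounded above, invoke Statement I to obtain $s \; = \; \sup_{\,\mu}\,(-\,A)$, and then argue that $-\,s$ fulfils the role of $\inf_{\,\mu}\,A$.

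The key technical point on which the whole argument rests is the identity $\mu(\,-\,x\,) \; = \; \mu(\,x\,)$. This is where the fuzzy setting could in principle break the classical trick, so I would settle it first. Applying condition $(ii)$ of Definition $1$ to $x$ gives $\mu(\,-\,x\,) \; \geq \; \mu(\,x\,)$, and applying the same condition to $-\,x$ gives $\mu(\,x\,) \; = \; \mu(\,-\,(\,-\,x\,)\,) \; \geq \; \mu(\,-\,x\,)$, forcing equality. Consequently $(\,-\,x\,)\,\mu(\,-\,x\,) \; = \; -\,x\,\mu(\,x\,)$ for every $x \; \varepsilon \; A$, and hence
\[
\{\,(\,-\,x\,)\,\mu(\,-\,x\,) \; : \; x \; \varepsilon \; A\,\} \; \; = \; \; -\,\{\,x\,\mu(\,x\,) \; : \; x \; \varepsilon \; A\,\}.
\]

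Using Definition $5$, boundedness below of $A$ in $\mathbb{R}_{\,\mu}$ means that $\{\,x\,\mu(\,x\,) \; : \; x \; \varepsilon \; A\,\}$ is bounded below in $\mathbb{R}$, so the displayed set on the right is bounded above in $\mathbb{R}$. Therefore $-\,A$ is $\mu$-bounded above, and Statement I supplies $s \; = \; \sup_{\,\mu}\,(-\,A) \; = \; \sup\,\{\,-\,x\,\mu(\,x\,) \; : \; x \; \varepsilon \; A\,\}$. A standard $\sup$/$\inf$ duality in $\mathbb{R}$ then gives $-\,s \; = \; \inf\,\{\,x\,\mu(\,x\,) \; : \; x \; \varepsilon \; A\,\}$, which by Definition $5$ is precisely $\inf_{\,\mu}\,A$, and existence is established.

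The only genuine obstacle, as flagged above, is the symmetry $\mu(\,-\,x\,) \; = \; \mu(\,x\,)$; once that is secured, the negation-reflection argument is essentially formal. Everything else is bookkeeping around Definition $5$ and the ordinary $\mathbb{R}$-level identity $\inf(\,-\,S\,) \; = \; -\,\sup\,S$.
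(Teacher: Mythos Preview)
Your argument is correct: the symmetry $\mu(-x)=\mu(x)$ follows from two applications of axiom $(ii)$ in Definition~1, and with it the reflection $A\mapsto -A$ carries $\mu$-lower bounds to $\mu$-upper bounds, so Statement~I applied to $-A$ yields $\sup_{\mu}(-A)$ whose negative is $\inf_{\mu}A$ via the ordinary identity $\inf S=-\sup(-S)$ at the level of $\{x\mu(x):x\in A\}\subseteq\mathbb{R}$. The paper states this theorem without proof, treating it as the evident $\mu$-analogue of the classical supremum-to-infimum reduction; your write-up is exactly that standard reduction, with the one nontrivial verification (evenness of $\mu$) handled cleanly.
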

\medskip
\begin{theorem}
Let $A$ be a non $-$ empty subset of $\mathbb{R_{\,\mu}}$ and
$\mu\,-$ bounded above in $\mathbb{R_{\,\mu}}$. An $\mu\,-$ upper
bounded $M$ of $A$ is $\mu\,-$ supremum of $A$ if and only if for
each \, $\varepsilon \;\,>\;\, 0$ there exists an element \,
$x_{\,1} \;\,\varepsilon\;\, A$ such that \, $M \;-\; \varepsilon
\;\,<\;\, x_{\,1}\,\mu (\,x_{\,1}\,) \;\,\leq\;\, M$.
\end{theorem}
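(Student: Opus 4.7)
\bigskip
\noindent\textbf{Proof plan.}\; The strategy is to transport the assertion into the real line via the auxiliary set
\[
S \;=\; \{\, x\,\mu(x) \,:\, x \;\varepsilon\; A \,\} \;\subseteq\; \mathbb{R},
\]
after which the claim reduces to the classical $\varepsilon$-characterisation of $\sup S$ in $\mathbb{R}$.

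First I would unpack the three pieces of $\mu$-terminology against Definition~4. The hypothesis that $A$ is $\mu$-bounded above is, word-for-word, the statement that $S$ is bounded above in $\mathbb{R}$, and $\sup_{\,\mu} A$ is by definition the ordinary supremum $\sup S$. Similarly, ``$M$ is a $\mu$-upper bound of $A$'' unfolds to ``$x\,\mu(x) \leq M$ for every $x \;\varepsilon\; A$,'' i.e.\ $M$ is a genuine upper bound of $S$ in $\mathbb{R}$.

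For the forward direction, I would assume $M = \sup_{\,\mu} A = \sup S$. Given $\varepsilon > 0$, the number $M - \varepsilon$ is strictly less than $M$, hence fails to be an upper bound of $S$ by the least-upper-bound property in $\mathbb{R}$; therefore some $s \;\varepsilon\; S$ satisfies $M - \varepsilon < s$, while $s \leq M$ automatically. Writing $s = x_{\,1}\,\mu(x_{\,1})$ with $x_{\,1} \;\varepsilon\; A$ furnishes the element required by the theorem.

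For the converse I would argue by contradiction. Assume $M$ is a $\mu$-upper bound satisfying the $\varepsilon$-condition but $M \neq \sup_{\,\mu} A$. Since $M$ is an upper bound of $S$, the number $M' := \sup S$ satisfies $M' < M$. Setting $\varepsilon := M - M' > 0$ in the $\varepsilon$-condition produces $x_{\,1} \;\varepsilon\; A$ with $x_{\,1}\,\mu(x_{\,1}) > M - \varepsilon = M'$, contradicting the fact that $M'$ is an upper bound of $S$; hence $M = \sup S = \sup_{\,\mu} A$. The only point demanding genuine care is the bookkeeping between the $\mu$-ordering on $\mathbb{R_{\,\mu}}$ and the usual ordering on $\mathbb{R}$: once one observes that every quantity in the statement --- the upper bound, the supremum and the strict inequality $M - \varepsilon < x_{\,1}\,\mu(x_{\,1})$ --- actually lives on the real side through the map $x \mapsto x\,\mu(x)$, the theorem collapses to the standard supremum criterion and no further obstacle remains.
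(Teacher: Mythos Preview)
Your argument is correct. The paper states this theorem without proof, so there is nothing to compare against; your reduction to the ordinary set $S = \{\,x\,\mu(x) : x \;\varepsilon\; A\,\}$ and the classical $\varepsilon$-characterisation of $\sup S$ is exactly the argument the definitions are designed to invite, and both directions go through as you wrote them.
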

\medskip
\S \, 2. \hspace{1.0cm} \textbf{Sequence in} $\mathbb{R_{\,\mu}}$ \,
:
\medskip
\begin{definition}
Let \, $\{\,x_{\,n}\,\}_{\,n}$ be a sequence in
$\mathbb{R_{\,\mu}}$. Then \, $\{\,x_{\,n}\,\}_{\,n}$ is said to
converge to $x_{\,0}$ in $\mathbb{R_{\,\mu}}$ if for every \,
$\varepsilon \;\,>\;\, 0$, there exists a natural number $k$ such
that $|\,x_{\,n} \;-\; x_{\,0}\,|_{\,\mu} \;\,<\;\, \varepsilon$
\;\; for all \; $n \;\,\geq\;\, k$. If $\{\,x_{\,n}\,\}_{\,n}$
converges to $x_{\,0}$ in $\mathbb{R_{\,\mu}}$ then we express it by
$x_{\,n} \;\,\mathop  \to \limits_\mu  \;\,x_{\,0} $ \, as \, $n
\;\,\longrightarrow\;\, \infty$ , $x_{\,0}$ is called $\mu\,-$ limit
of the sequence $\{\,x_{\,n}\,\}_{\,n}$ in $\mathbb{R_{\,\mu}}$ and
$\{\,x_{\,n}\,\}_{\,n}$ is called $\mu\,-$ convergent in
$\mathbb{R_{\,\mu}}$.
\end{definition}
\begin{theorem}
If $\{\,x_{\,n}\,\}_{\,n}$ converges to $l$ in $\mathbb{R}$ then
$\{\,x_{\,n}\,\}_{\,n}$ converges to $l$ in $\mathbb{R_{\,\mu}}$.
\end{theorem}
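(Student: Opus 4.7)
The plan is to unpack both notions of convergence and exploit the fact that a membership function is bounded above by $1$. By Definition of $|\,\cdot\,|_{\mu}$, we have $|a|_{\mu} = |a|\,\mu(a)$ for every $a \in \mathbb{R}_{\mu}$, and since $\mu$ is a membership function of a fuzzy set it satisfies $0 \leq \mu(a) \leq 1$ for all $a$. Consequently $|a|_{\mu} \leq |a|$ for all $a \in \mathbb{R}_{\mu}$. This simple inequality is the whole engine of the argument.

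First I would fix $\varepsilon > 0$ and invoke the hypothesis that $\{x_n\}_n$ converges to $l$ in $\mathbb{R}$ in the ordinary sense. This yields a natural number $k$ such that $|x_n - l| < \varepsilon$ for every $n \geq k$. Next I would apply the bound above to the element $x_n - l \in \mathbb{R}_{\mu}$, obtaining
\[
|x_n - l|_{\mu} \;=\; |x_n - l|\,\mu(x_n - l) \;\leq\; |x_n - l| \;<\; \varepsilon
\]
for all $n \geq k$. This is precisely the condition required by the definition of $\mu$-convergence, so $x_n \mathop{\to}\limits_{\mu} l$.

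There is no real obstacle here beyond ensuring the reader accepts that $\mu(\cdot) \leq 1$ follows from $F$ being a fuzzy set (and is consistent with condition $(v)$ in Definition $1$, which forces $\mu(0) = \mu(1) = 1$ as the maximum). The converse implication of course need not hold, since $|a|_{\mu}$ can be strictly smaller than $|a|$ whenever $\mu(a) < 1$, but the direction asserted in the theorem is immediate from the trivial estimate $|a|_{\mu} \leq |a|$.
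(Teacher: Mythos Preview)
Your argument is correct and is exactly the natural elaboration of what the paper intends: the paper's own proof consists of the single word ``Obvious,'' and your use of the bound $|a|_{\mu}=|a|\,\mu(a)\le |a|$ (since $0\le\mu(a)\le 1$) is precisely the one-line justification behind that. There is nothing to add or correct.
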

\begin{proof}
Obvious
\end{proof}
\begin{remark}
The converse of the above theorem is not necessarily true. In fact,
$\mu\,-$ limit of $\{\,x_{\,n}\,\}_{\,n}$ is not necessarily unique.
For example , let \, $x_{\,n} \;=\; log n \;+\; 1$ \, and \,
$y_{\,n} \;=\; log n \;+\; \sqrt{\,2}$ \; for all \; $n \;\geq\; 1$
, \, $\mu (\,x_{\,n}\,) \;\,=\;\, \mu (\,y_{\,n}\,) \;\,=\;\,
\frac{n}{(\,n \;+\; 1\,)^{\,3}}$, $\mu (\,x_{\,n} \;+\; x_{\,m}\,)
\;\,=\;\, \mu (\,x_{\,n} \;-\; x_{\,m}\,) \;\,=\;\, \mu (\,x_{\,n}
\; x_{\,m}\,) \;\,=\;\, \mu (\,x_{\,n} \; x_{\,m}^{\,- \,1}\,)
\;\,=\;\, 1$, $\mu (\,y_{\,n} \;+\; y_{\,m}\,) \;\,=\;\, \mu
(\,y_{\,n} \;-\; y_{\,m}\,) \;\,=\;\, \mu (\,y_{\,n} \; y_{\,m}\,)
\;\,=\;\, \mu (\,y_{\,n} \; y_{\,m}^{\,- \,1}\,) \;\,=\;\, 1$ , $\mu
(\,x_{\,n} \;+\; y_{\,m}\,) \;\,=\;\, \mu (\,x_{\,n} \;-\;
y_{\,m}\,) \;\,=\;\, \mu (\,x_{\,n} \; y_{\,m}^{\,-\,1}\,) \;\,=\;\,
\mu (\,x_{\,n}^{\,-\,1} \; y_{\,m}^{\,- \,1}\,) \;\,=\;\, 1$, \\$\mu
(\,x\,) \;=\; 0$ \; for all other \, $x \;\,\varepsilon\;\,
\mathbb{R}$. \; Here we see that $x_{\,n} \;\,\mathop  \to
\limits_\mu  \;\,0 $ \, as \, $n \;\,\longrightarrow\;\, \infty$ but
$\{\,x_{\,n}\,\}_{\,n}$ is not convergent in $\mathbb{R}$. We also
see that $x_{\,n} \;\,\mathop  \to \limits_\mu  \;\,1 \;-\;
\sqrt{\,2} $ \, as \, $n \;\,\longrightarrow\;\, \infty$ .
\end{remark}
\medskip
\begin{remark}
If $inf\,\{\,\mu (\,x\,) \;\,: \;\, x \;\,\varepsilon\;\, \mathbb{R}
\,\}$ \; $= \;\; a \;\,>\;\, 0$ then $\{\,x_{\,n}\,\}_{\,n}$
converges to $l$ in $\mathbb{R}$ if and only if
$\{\,x_{\,n}\,\}_{\,n}$ converges to $l$ in $\mathbb{R_{\,\mu}}$.
\end{remark}
\medskip
\begin{remark}
If $inf\,\{\,\mu (\,x\,) \;\,: \;\, x \;\,\varepsilon\;\, \mathbb{R}
\,\}$ \; $= \;\,a \;\,>\;\, 0$ \, and \, $x_{\,n} \;\,\mathop  \to
\limits_\mu \;\,l $ \, as \, $n \;\,\longrightarrow\;\, \infty$ \,
then \, $l$ is unique .
\end{remark}
\begin{theorem}
If $inf\,\{\,\mu (\,x\,) \;\,: \;\, x \;\,\varepsilon\;\, \mathbb{R}
\,\}$ \; $= \;\,a \;\,>\;\, 0$ \, and \, $\{\,x_{\,n}\,\}_{\,n}$ \,
is $\mu\,-$ convergent in $\mathbb{R_{\,\mu}}$ then \,
$\{\,x_{\,n}\,\}_{\,n}$ \, is $\mu\,-$ bounded in
$\mathbb{R_{\,\mu}}$.
\end{theorem}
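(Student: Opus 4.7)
The plan is to mimic the classical proof that convergent sequences are bounded, using the hypothesis $a>0$ to convert $\mu$-smallness into ordinary smallness. By Definition 4, it suffices to show that the set $\{x_n\,\mu(x_n):n\in\mathbb{N}\}$ is bounded above and below in $\mathbb{R}$.

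First I would unpack the convergence hypothesis. Let $l$ be a $\mu$-limit of $\{x_n\}_n$. Taking $\varepsilon = 1$ in Definition 5, I would pick a natural number $k$ such that $|x_n - l|_\mu < 1$ for all $n \geq k$. Using Definition 3, this reads
\[
|x_n - l|\,\mu(x_n - l) \;<\; 1 \qquad \text{for all } n\geq k.
\]
Since $\mu(x_n - l) \geq a > 0$ by the global infimum hypothesis, I can divide and conclude $|x_n - l| < 1/a$ for $n \geq k$, hence $|x_n| < |l| + 1/a$ for all $n \geq k$.

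Next, recalling that the membership function takes values in $[0,1]$, I would use $\mu(x_n)\leq 1$ (which follows from axiom (v) in Definition 1 being the maximum attained value) to obtain
\[
|x_n\,\mu(x_n)| \;\leq\; |x_n| \;<\; |l| \,+\, \tfrac{1}{a} \qquad \text{for all } n \geq k.
\]
For the finitely many indices $n = 1,2,\dots,k-1$, the quantity $|x_n\,\mu(x_n)|$ is automatically bounded by its maximum $M_0$. Setting $M \;=\; \max\bigl\{M_0,\; |l| + 1/a\bigr\}$, I get $-M \leq x_n\,\mu(x_n) \leq M$ for every $n$, which is precisely the statement that $\{x_n\}_n$ is $\mu$-bounded in $\mathbb{R}_\mu$.

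The only real subtlety is the innocuous but essential move at the very start: converting the $\mu$-statement $|x_n-l|_\mu < 1$ into the classical statement $|x_n - l| < 1/a$. Without the positive infimum $a$ on $\mu$, the factor $\mu(x_n - l)$ could be arbitrarily small and this step would collapse, which is exactly why the hypothesis $\inf\mu = a > 0$ cannot be omitted (compare Remark 1, where non-uniqueness of the $\mu$-limit already signals the pathology that arises when $a = 0$). Apart from that, every step is a one-line manipulation of the definitions.
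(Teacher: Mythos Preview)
Your proof is correct and follows essentially the same strategy as the paper: take $\varepsilon=1$ in the definition of $\mu$-convergence, use the hypothesis $\mu\geq a>0$ to pass from $|x_n-l|_{\mu}<1$ to a classical bound on $|x_n-l|$, and then conclude that $\{x_n\mu(x_n)\}$ is bounded (with the finitely many initial terms handled trivially). Your execution is in fact cleaner than the paper's, which manipulates the inequality $l\mu(x_n-l)-1<x_n\mu(x_n-l)<l\mu(x_n-l)+1$ directly into a bound on $x_n\mu(x_n)$ without isolating $|x_n|$ first. One small quibble: the bound $\mu(x_n)\leq 1$ is not a consequence of axiom~(v) in Definition~1 (which only fixes $\mu(0)=\mu(1)=1$), but simply of the standing convention that a membership function takes values in $[0,1]$.
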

\begin{proof}
Let \, $x_{\,n} \;\,\mathop  \to \limits_\mu \;\,l $ \, as \, $n
\;\,\longrightarrow\;\, \infty$. \; $\Longrightarrow$ \; For \,
$\varepsilon \;=\;1$, there exists \, $k \;\,\varepsilon\;\,
\mathbb{N}$ \, such that $|\,x_{\,n} \;-\; l\,|_{\,\mu} \;\,<\;\, 1$
\;\; for all \; $n \;\,\geq\;\, k$ \; $\Longrightarrow$ \; $l\;\mu
(\,x_{\,n} \;-\; l\,) \;-\;1 \;\,<\;\, x_{\,n}\;\mu (\,x_{\,n} \;-\;
l\,) \;\,<\;\, l\;\mu (\,x_{\,n} \;-\; l\,) \;+\; 1$ \;\; for all \;
$n \;\,\geq\;\, k$ \; $\Longrightarrow$ \; $a\,(\,l\,a \;-\; 1\,)
\;\,<\;\, x_{\,n}\;\mu (\,x_{\,n}\,) \;\,<\;\, \frac{1}{a}\;(\,l
\;+\; 1\,)$ \;\; for all \; $n \;\,\geq\;\, k$ \; $\Longrightarrow$
\; $\{ \, x_{\,n}\;\mu (\,x_{\,n}\,) \; \, : \; \, n \; \,
\varepsilon \; \,\mathbb{N} \}$ \, is bounded in $\mathbb{R}$ \;
$\Longrightarrow$ \; $\{\,x_{\,n}\,\}_{\,n}$ \, is $\mu\,-$ bounded
in $\mathbb{R_{\,\mu}}$.
\end{proof}
\begin{remark}
If we drop the condition \, $inf\,\{\,\mu (\,x\,) \;\,: \;\, x
\;\,\varepsilon\;\, \mathbb{R} \,\}$ \; $= \;\,a \;\,>\;\, 0$ \,
from the above theorem , the theorem may fails to hold. For example
, consider \, $x_{\,n} \;=\; e^{\,n} \;+\; 2$ and define \, $\mu
(\,x_{\,n}\,) \;=\; 1 \;,\; \mu (\,x_{\,n} \;+\; 1\,) \;=\;
\frac{1}{(\,e^{\,n} \;+\; 1\,)^{\,2}}$ \;\; for all \; $n
\;\,\geq\;\, 1$ \,,\, $\mu (\,x_{\,n} \;+\; x_{\,m}\,) \;\,=\;\, \mu
(\,x_{\,n} \;-\; x_{\,m}\,) \;\,=\;\, \mu (\,x_{\,n} \; x_{\,m}\,)
\;\,=\;\, \mu (\,x_{\,n} \; x_{\,m}^{\,- \,1}\,) \;\,=\;\, 1$ \,,\,
$\mu (\,x\,) \;=\; 0$ \; for all other \, $x \;\,\varepsilon\;\,
\mathbb{R}$. \, Here , \, $x_{\,n} \;\,\mathop  \to \limits_\mu
\;\,1 $ \, as \, $n \;\,\longrightarrow\;\, \infty$ \, but
$\{\,x_{\,n}\,\}_{\,n}$ \, is not \, $\mu\,-$ bounded in
$\mathbb{R_{\,\mu}}$.
\end{remark}
\begin{theorem}
Let $x_{\,n} \;\,\longrightarrow\;\, l$ \, and \, $y_{\,n}
\;\,\longrightarrow\;\, m$ \, as \, $n \;\,\longrightarrow\;\,
\infty$. Then \, $x_{\,n} \;+\; y_{\,n} \;\,\mathop  \to \limits_\mu
\;\, l \;+\; m $ \, as \, $n \;\,\longrightarrow\;\, \infty$
\end{theorem}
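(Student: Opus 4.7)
The plan is to reduce the claimed $\mu$-convergence to an ordinary estimate in $\mathbb{R}$. By Definition~4, for every $n$,
\[
|(x_{\,n} + y_{\,n}) - (l + m)|_{\,\mu} \;=\; |(x_{\,n} - l) + (y_{\,n} - m)|\,\mu\bigl((x_{\,n} - l) + (y_{\,n} - m)\bigr),
\]
and since every membership function takes values in $[0,1]$, the second factor is automatically at most $1$; the first factor will be controlled by the classical triangle inequality in $\mathbb{R}$.

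Given $\varepsilon > 0$, I would invoke the ordinary convergences $x_{\,n} \to l$ and $y_{\,n} \to m$ to choose natural numbers $k_{\,1}, k_{\,2}$ with $|x_{\,n} - l| < \varepsilon/2$ for all $n \geq k_{\,1}$ and $|y_{\,n} - m| < \varepsilon/2$ for all $n \geq k_{\,2}$. Taking $k = \max\{k_{\,1}, k_{\,2}\}$, for every $n \geq k$ the usual triangle inequality gives
\[
|(x_{\,n} - l) + (y_{\,n} - m)| \;\leq\; |x_{\,n} - l| + |y_{\,n} - m| \;<\; \varepsilon,
\]
and combining with the observation on the membership factor,
\[
|(x_{\,n} + y_{\,n}) - (l + m)|_{\,\mu} \;\leq\; |(x_{\,n} - l) + (y_{\,n} - m)| \;<\; \varepsilon
\]
for all $n \geq k$, which is exactly the definition of $x_{\,n} + y_{\,n} \;\mathop{\to}\limits_{\,\mu}\; l + m$.

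There is no real obstacle here: the argument is the classical one, with the membership value only shrinking the quantity we must bound. The harder variant, which this theorem carefully avoids, would be to prove the same conclusion under the hypothesis that $\{x_{\,n}\}$ and $\{y_{\,n}\}$ are merely $\mu$-convergent. In that case one would need to invoke Property~3(iv) to relate $|a+b|_{\,\mu}/\mu(a+b)$ to $|a|_{\,\mu}/\mu(a) + |b|_{\,\mu}/\mu(b)$, and \emph{a priori} nothing prevents $\mu\bigl((x_{\,n} - l) + (y_{\,n} - m)\bigr)$ from vanishing or being arbitrarily small. This is why the neighbouring results in this section impose the hypothesis $\inf\{\,\mu(x) : x \in \mathbb{R}\,\} > 0$, and the same condition would essentially be needed for a $\mu$-convergent analogue.
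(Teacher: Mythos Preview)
Your proof is correct and matches the paper's approach: the paper's own proof consists of the single word ``Obvious,'' and you have filled in exactly the natural argument behind that word---namely, that classical convergence of $x_n+y_n$ to $l+m$ in $\mathbb{R}$ combined with the bound $|a|_{\mu}=|a|\,\mu(a)\le |a|$ immediately yields $\mu$-convergence. Your closing remarks about the harder $\mu$-convergent variant are also apt and align with the paper's subsequent Note and Theorem~7.
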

\begin{proof}
Obvious
\end{proof}
\begin{note}
$x_{\,n} \;\,\mathop  \to \limits_\mu \;\,l $ , $y_{\,n} \;\,\mathop
\to \limits_\mu \;\,m $ \, as \, $n \;\,\longrightarrow\;\, \infty$
\, do not always imply \, $x_{\,n} \;+\; y_{\,n} \;\,\mathop  \to
\limits_\mu \;\, l \;+\; m $ \, as \, $n \;\,\longrightarrow\;\,
\infty$. For example, let \, $x_{\,n} \;=\; y_{\,n} \;=\; \frac{(\,n
\;+\; 1\,)^{\,2}}{n^{\,2}}$ , \, $\mu (\,x_{\,n} \;-\; 1\,) \;=\;
\frac{n^{\,2}}{(\,2\,n \;+\; 1\,)^{\,3}}$, \, $\mu (\,2\,x_{\,n}\,)
\;=\; \frac{n^{\,2}}{2\,(\,n \;+\; 1\,)^{\,3}}$ , \, $\mu (\,t_{\,n}
\;+\; t_{\,m}\,) \;\,=\;\, \mu (\,t_{\,n} \;-\; t_{\,m}\,) \;\,=\;\,
\mu (\,t_{\,n} \; t_{\,m}\,) \;\,=\;\, \mu (\,t_{\,n} \;
t_{\,m}^{\,- \,1}\,) \;\,=\;\, 1$ , \, where \, $t_{\,n} \;=\;
x_{\,n} \;-\; 1$ , $m \;\neq\; n$ , \, $\mu (\,x\,) \;=\; 0$ \; for
all other \, $x \;\,\varepsilon\;\, \mathbb{R}$. \, Here we see that
\, $x_{\,n} \;\,\mathop  \to \limits_\mu \;\,1 $  , \, $y_{\,n}
\;\,\mathop  \to \limits_\mu \;\,1 $ \, as \, $n
\;\,\longrightarrow\;\, \infty$. \, But \, $x_{\,n} \;+\; y_{\,n}
\;\,\mathop  \to \limits_\mu \;\, 0 $ \, as \, $n
\;\,\longrightarrow\;\, \infty$.
\end{note}
\medskip
\begin{theorem}
If $inf\,\{\,\mu (\,x\,) \;\,: \;\, x \;\,\varepsilon\;\, \mathbb{R}
\,\}$ \; $= \;\; a \;\,>\;\, 0$ , \, $x_{\,n} \;\,\mathop  \to
\limits_\mu \;\,l $ , $y_{\,n} \;\,\mathop \to \limits_\mu \;\,m $
\, as \, $n \;\,\longrightarrow\;\, \infty$ \, then \, $x_{\,n}
\;+\; y_{\,n} \;\,\mathop  \to \limits_\mu \;\, l \;+\; m $ \, as \,
$n \;\,\longrightarrow\;\, \infty$.
\end{theorem}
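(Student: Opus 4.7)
The plan is to reduce the statement to the ordinary triangle inequality in $\mathbb{R}$, using the uniform lower bound $a$ on $\mu$ in one direction and the universal upper bound $\mu \leq 1$ (built into the definition of a membership function) in the other direction.

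First I would unpack the definitions: $x_n \to_\mu l$ means $|x_n - l|\,\mu(x_n - l) < \varepsilon$ eventually, for every $\varepsilon > 0$, and similarly for $y_n$. The goal is to control $|x_n + y_n - (l+m)|\,\mu(x_n + y_n - l - m)$. The difficulty that Note~1 highlights is that without a lower bound on $\mu$, the factor $\mu(x_n - l)$ can be arbitrarily small, so smallness of $|x_n - l|_\mu$ does not force smallness of $|x_n - l|$; the present hypothesis $\inf_{x \in \mathbb{R}} \mu(x) = a > 0$ is precisely what lets us bypass this.

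Given $\varepsilon > 0$, I would apply the $\mu$-convergence of $\{x_n\}$ and $\{y_n\}$ with the tolerance $a\varepsilon/2$ in place of $\varepsilon$. This produces $k_1, k_2 \in \mathbb{N}$ such that for $n \geq k_1$,
\[
|x_n - l|\,\mu(x_n - l) \; < \; \frac{a\,\varepsilon}{2},
\]
and analogously for $y_n$ and $n \geq k_2$. Since $\mu(x_n - l) \geq a$ and $\mu(y_n - m) \geq a$, dividing yields $|x_n - l| < \varepsilon/2$ and $|y_n - m| < \varepsilon/2$ for $n \geq k := \max(k_1, k_2)$. The ordinary triangle inequality in $\mathbb{R}$ then gives $|x_n + y_n - l - m| < \varepsilon$.

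Finally, I would convert back to the $\mu$-modulus. Because $\mu$ takes values in $[0,1]$, we have
\[
|x_n + y_n - (l+m)|_{\,\mu} \; = \; |x_n + y_n - l - m|\,\mu(x_n + y_n - l - m) \; \leq \; |x_n + y_n - l - m| \; < \; \varepsilon
\]
for all $n \geq k$, which is exactly the definition of $x_n + y_n \to_\mu l+m$. The only real subtlety is choosing the tolerance $a\varepsilon/2$ at the start so that the lower bound $a$ and the factor $2$ from the triangle inequality cancel cleanly; everything else is routine once the two-sided control $a \leq \mu(\cdot) \leq 1$ is exploited on the correct side of each inequality.
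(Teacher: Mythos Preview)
Your argument is correct. The paper itself states this theorem without proof, the intended justification being Remark~2 (under $\inf_x \mu(x)=a>0$, $\mu$-convergence coincides with ordinary convergence in $\mathbb{R}$) together with Theorem~6; your proof is precisely an explicit unpacking of that equivalence via the two-sided bound $a\le\mu(\cdot)\le 1$, so it matches the paper's implicit approach.
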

\medskip
\begin{theorem}
Let $x_{\,n} \;\,\longrightarrow\;\, l$ \, and \, $y_{\,n}
\;\,\longrightarrow\;\, m$ \, as \, $n \;\,\longrightarrow\;\,
\infty$. Then \, $x_{\,n}\; y_{\,n} \;\,\mathop  \to \limits_\mu
\;\, l\, m $ \, as \, $n \;\,\longrightarrow\;\, \infty$
\end{theorem}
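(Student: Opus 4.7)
\medskip
\textbf{Proof plan.} Unwinding the definitions, I need to show that for every $\varepsilon > 0$ there is a natural number $k$ such that $|\,x_{\,n}\,y_{\,n} \;-\; l\,m\,|_{\,\mu} \;<\; \varepsilon$ for all $n \;\geq\; k$, where by Definition~3 the left-hand side equals $|\,x_{\,n}\,y_{\,n} \;-\; l\,m\,| \cdot \mu (\,x_{\,n}\,y_{\,n} \;-\; l\,m\,)$.

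The key observation is that any membership function takes values in $[\,0,\,1\,]$, so $\mu (\,x_{\,n}\,y_{\,n} \;-\; l\,m\,) \;\leq\; 1$ and hence
\[
|\,x_{\,n}\,y_{\,n} \;-\; l\,m\,|_{\,\mu} \;\;\leq\;\; |\,x_{\,n}\,y_{\,n} \;-\; l\,m\,|
\]
for every $n$. This is exactly the same reduction that was tacitly used to dispose of the additive version (Theorem~4) as ``Obvious,'' and it is the only place where the membership function enters.

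Once the problem has been reduced to the usual modulus on $\mathbb{R}$, I would invoke the standard product rule for limits in $\mathbb{R}$: since $x_{\,n} \;\longrightarrow\; l$ and $y_{\,n} \;\longrightarrow\; m$ in $\mathbb{R}$, the sequence $\{\,x_{\,n}\,\}$ is bounded in $\mathbb{R}$ and the familiar estimate $|\,x_{\,n}\,y_{\,n} \;-\; l\,m\,| \;\leq\; |\,x_{\,n}\,|\,|\,y_{\,n} \;-\; m\,| \;+\; |\,m\,|\,|\,x_{\,n} \;-\; l\,|$ forces $x_{\,n}\,y_{\,n} \;\longrightarrow\; l\,m$ in $\mathbb{R}$. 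Given $\varepsilon \;>\; 0$, choose $k$ so that $|\,x_{\,n}\,y_{\,n} \;-\; l\,m\,| \;<\; \varepsilon$ for all $n \;\geq\; k$; combining this with the inequality above yields $|\,x_{\,n}\,y_{\,n} \;-\; l\,m\,|_{\,\mu} \;<\; \varepsilon$ for all such $n$, which is exactly $x_{\,n}\,y_{\,n} \;\mathop\to\limits_\mu\; l\,m$.

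There is essentially no obstacle here: the only subtlety is realising that because $\mu \;\leq\; 1$, every ordinary convergence in $\mathbb{R}$ automatically upgrades to $\mu\,-$ convergence (this is the content of Theorem~2 applied to the sequence $x_{\,n}\,y_{\,n} \;-\; l\,m$). The phenomenon illustrated in the Note after Theorem~4 --- where $\mu\,-$ convergence of $\{\,x_{\,n}\,\}$ and $\{\,y_{\,n}\,\}$ fails to propagate to the sum --- does \emph{not} occur here because both hypotheses are strengthened to genuine convergence in $\mathbb{R}$.
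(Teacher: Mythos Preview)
Your proof is correct and matches the paper's (implicit) approach: the paper gives no proof for this theorem at all, treating it as obvious in the same spirit as the additive analogue, and your reduction via $\mu \leq 1$ to the classical product rule for real sequences is exactly what is tacitly intended. One small bookkeeping remark: in the paper's numbering the result ``convergence in $\mathbb{R}$ implies $\mu$-convergence'' is Theorem~4 and the additive version is Theorem~6, so your internal cross-references are shifted.
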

\medskip
\begin{note}
$x_{\,n} \;\,\mathop  \to \limits_\mu \;\,l $ , $y_{\,n} \;\,\mathop
\to \limits_\mu \;\,m $ \, as \, $n \;\,\longrightarrow\;\, \infty$
\, do not always imply \, $x_{\,n} \; y_{\,n} \;\,\mathop  \to
\limits_\mu \;\, l \, m $ \, as \, $n \;\,\longrightarrow\;\,
\infty$. For example, let \, $x_{\,n} \;=\; (\,1 \;+\;
\frac{1}{n}\,)^{\,2}$ \,and\, $\mu (\,x_{\,n} \;-\; 1\,) \;=\;
\frac{n^{\,2}}{(\,2\,n \;+\; 1\,)^{\,3}}\,;$ \, $y_{\,n} \;=\;
\frac{n \;+\; 1}{3\,n \;+\; 1}$ \,and\, $\mu (\,y_{\,n} \;-\;
\frac{1}{3}\,) \;=\; \frac{3\,(\,3\,n \;+\; 1\,)}{2\,n^{\,2}} \, ;$
\, $\mu (\,x_{\,n}\;y_{\,n}\,) \;=\; \frac{1}{n}$ \,and define $\mu$
suitably at all other points of $\mathbb{R}$ such that all the
conditions of the definition$(\,1\,)$ are being satisfied. Here we
see that \, $x_{\,n} \;\,\mathop  \to \limits_\mu \;\,1 $  , \,
$y_{\,n} \;\,\mathop  \to \limits_\mu \;\,\frac{1}{3} $ \, as \, $n
\;\,\longrightarrow\;\, \infty$. \, But \, $x_{\,n} \; y_{\,n}
\;\,\mathop  \to \limits_\mu \;\, 0 $ \, as \, $n
\;\,\longrightarrow\;\, \infty$.
\end{note}
\medskip
\begin{theorem}
If $inf\,\{\,\mu (\,x\,) \;\,: \;\, x \;\,\varepsilon\;\, \mathbb{R}
\,\}$ \; $= \;\; a \;\,>\;\, 0$ , \, $x_{\,n} \;\,\mathop  \to
\limits_\mu \;\,l $ , $y_{\,n} \;\,\mathop \to \limits_\mu \;\,m $
\, as \, $n \;\,\longrightarrow\;\, \infty$ \, then \, $x_{\,n} \;
y_{\,n} \;\,\mathop  \to \limits_\mu \;\, l \, m $ \, as \, $n
\;\,\longrightarrow\;\, \infty$.
\end{theorem}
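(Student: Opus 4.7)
The plan is to reduce the statement to classical real analysis by exploiting the hypothesis $\inf\{\mu(x):x\in\mathbb{R}\}=a>0$. First, I would observe that for every $z\in\mathbb{R}$ one has $|z|_{\mu}=|z|\,\mu(z)\geq a\,|z|$, so a bound of the form $|z|_{\mu}<\varepsilon$ automatically gives $|z|<\varepsilon/a$. Applying this to $z=x_{n}-l$ and to $z=y_{n}-m$, the $\mu$-convergences $x_{n}\to_{\mu} l$ and $y_{n}\to_{\mu} m$ immediately produce ordinary convergences $x_{n}\to l$ and $y_{n}\to m$ in $\mathbb{R}$; this is essentially the content of the earlier remark that identifies $\mu$-convergence with ordinary convergence whenever $a>0$.

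Next, the classical product rule for real sequences yields $x_{n}y_{n}\to l\,m$ in $\mathbb{R}$. Invoking the earlier theorem asserting that ordinary convergence in $\mathbb{R}$ implies $\mu$-convergence in $\mathbb{R}_{\mu}$, I would then conclude $x_{n}y_{n}\to_{\mu} l\,m$, which finishes the argument.

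For a self-contained $\varepsilon$-estimate avoiding the equivalence, I would instead use the standard algebraic identity
\[
x_{n}y_{n}-l\,m \;=\; (x_{n}-l)(y_{n}-m)\;+\;l(y_{n}-m)\;+\;m(x_{n}-l),
\]
combined with $\mu(x_{n}y_{n}-l\,m)\leq 1$, to obtain $|x_{n}y_{n}-l\,m|_{\mu}\leq|x_{n}y_{n}-l\,m|$. Each of the three terms on the right can then be forced below $\varepsilon/3$ once $|x_{n}-l|$ and $|y_{n}-m|$ are small enough, which is precisely what the estimate $|z|\leq|z|_{\mu}/a$ guarantees.

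The main obstacle is conceptual rather than computational: without the hypothesis $a>0$ one has no way to control the ordinary modulus by the $\mu$-modulus, and indeed the Note just above the statement exhibits explicit sequences whose individual $\mu$-limits exist but whose product converges under $\mu$ to an unrelated value. The positive infimum is used only at the single step recovering $|z|$ from $|z|_{\mu}$; once that step is available, either route above goes through without further difficulty.
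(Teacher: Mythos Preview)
Your argument is correct. The paper actually states this theorem without supplying a proof, so there is nothing to compare against line by line. That said, your first route is precisely what the surrounding material is set up to deliver: the earlier remark (that $\mu$-convergence and ordinary convergence coincide when $\inf_{x}\mu(x)=a>0$) together with the theorem that ordinary convergence implies $\mu$-convergence reduces the present statement to the classical product rule, exactly as you indicate. Your second, self-contained $\varepsilon$-route is also valid; the only step worth making explicit there is that in bounding the cross term $|x_{n}-l|\,|y_{n}-m|$ you should first fix $n$ large enough that, say, $|x_{n}-l|<1$, so that this product is controlled by $|y_{n}-m|$ alone.
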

\medskip
\begin{definition}
A sequence \, $\{\,x_{\,n}\,\}_{\,n}$ \, in $\mathbb{R_{\,\mu}}$ \,
is said to be $\mu\,-$ increasing if \, $x_{\,1} \;\leq_{\,\mu}\;
x_{\,2} \;\leq_{\,\mu}\;\; \cdots \;\;\leq_{\,\mu}\;\; x_{\,n}
\;\;\leq_{\,\mu}\;\; x_{\,n \;+\; 1} \;\;\leq_{\,\mu}\;\; \cdots$
\end{definition}
\medskip
\begin{theorem}
If \, $\{\,x_{\,n}\,\}_{\,n}$ \, is \, $\mu\,-$ increasing and
$\mu\,-$ bounded above then \, $\{\,x_{\,n}\;\mu
(\,x_{\,n}\,)\,\}_{\,n}$ converges to its supremum .
\end{theorem}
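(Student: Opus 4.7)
The plan is to reduce the statement to the classical monotone convergence theorem in $\mathbb{R}$ applied to the auxiliary sequence of real numbers $a_n := x_{\,n}\,\mu(\,x_{\,n}\,)$. By Definition $2$, the hypothesis that $\{\,x_{\,n}\,\}_{\,n}$ is $\mu\,-$increasing translates directly into $a_{\,1} \;\leq\; a_{\,2} \;\leq\; \cdots \;\leq\; a_{\,n} \;\leq\; a_{\,n\,+\,1} \;\leq\; \cdots$ in $\mathbb{R}$. Likewise, by Definition $6$, $\mu\,-$boundedness above of $\{\,x_{\,n}\,\}_{\,n}$ means exactly that the set $\{\,a_{\,n}\;:\;n\;\varepsilon\;\mathbb{N}\,\}$ is bounded above in $\mathbb{R}$. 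So $\{\,a_{\,n}\,\}_{\,n}$ is a monotonically increasing, bounded above sequence of real numbers.

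Next I would invoke \textbf{Statement I} of the completeness property applied to $A \;=\; \{\,x_{\,n}\;:\;n\;\varepsilon\;\mathbb{N}\,\}$ to produce $M \;=\; sup_{\,\mu}\,A \;=\; sup\;\{\,a_{\,n}\;:\;n\;\varepsilon\;\mathbb{N}\,\}$ in $\mathbb{R}$. This is the candidate limit.

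The verification of convergence then proceeds by the $\varepsilon\,-$characterization of the $\mu\,-$supremum given in the third theorem of Section $1$: for every $\varepsilon \;>\; 0$ there exists $x_{\,k} \;\varepsilon\; A$ with $M \;-\; \varepsilon \;<\; a_{\,k} \;\leq\; M$. Combining this with the monotonicity $a_{\,k} \;\leq\; a_{\,n}$ for all $n \;\geq\; k$, and with the fact that $M$ is an upper bound, I obtain $M \;-\; \varepsilon \;<\; a_{\,n} \;\leq\; M$ for every $n \;\geq\; k$, i.e.\ $|\,a_{\,n} \;-\; M\,| \;<\; \varepsilon$. This is precisely the statement that $\{\,x_{\,n}\,\mu(\,x_{\,n}\,)\,\}_{\,n}$ converges (in the usual sense in $\mathbb{R}$) to $sup_{\,\mu}\,A$.

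I do not expect a real obstacle here: the entire argument is a translation through the definitions, and once one observes that $\{\,a_{\,n}\,\}_{\,n}$ is a classical monotone bounded real sequence, the result is immediate from monotone convergence in $\mathbb{R}$ together with the approximation property of the supremum already proved in the third theorem of Section $1$. The only subtle point worth stating explicitly is that the notion of convergence in the conclusion is the ordinary one in $\mathbb{R}$ applied to the induced sequence $\{\,x_{\,n}\,\mu(\,x_{\,n}\,)\,\}_{\,n}$, not the $\mu\,-$convergence of Definition $7$; this is why no assumption on $inf\,\{\,\mu(\,x\,)\,:\,x\;\varepsilon\;\mathbb{R}\,\}$ is needed.
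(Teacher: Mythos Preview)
Your argument is correct and is exactly the intended one: the paper states this theorem without proof, and the evident route---translate both hypotheses through Definitions~2 and~4 to see that $a_n = x_n\,\mu(x_n)$ is an ordinary increasing, bounded-above real sequence, then apply classical monotone convergence (or equivalently the $\varepsilon$-characterization of $\sup_\mu$ in Theorem~3)---is precisely what you do. One harmless slip in cross-referencing: $\mu$-boundedness is Definition~4 in the paper (not~6), and $\mu$-convergence is Definition~5 (not~7).
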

\medskip
\S \, 3 \hspace{1.5cm} $\mu\,-$ \textbf{Fuzzy Complex Numbers}
$\textbf{:}$
\begin{definition}
Let \, $z \;\,\varepsilon\;\, \mathbb{C_{\,\mu}}$ . $\mu\,-$
conjugate of $z$ is denoted by \, $\overline{z}_{\,\mu}$ and defined
by \, $\overline{z}_{\,\mu} \;=\; \overline{z}\,\mu (\,z\,)$ , \,
where \, $\overline{z}$ \, is the usual complex conjugate of $z$ .
\end{definition}
\medskip
\begin{property}
$(\,i\,)$ \hspace{1.5cm} $\overline{(\, \overline{z}_{\,\mu}
\,)}_{\,\mu} \;\,=\;\, z\,\,\mu (\,z\,)\,\,\mu
(\,\overline{z}_{\,\mu} \,)$ \\\\ $(\,ii\,)$ \hspace{1.5cm}
$\frac{\overline{(\,z_{\,1} \;+\; z_{\,2}\,)}_{\,\mu}}{\mu
(\,z_{\,1} \;+\; z_{\,2}\,)} \;\;=\;\; \frac{\overline{(\,z_{\,1}
\,)}_{\,\mu}}{\mu (\,z_{\,1}\,)} \;+\; \frac{\overline{(\,
z_{\,2}\,)}_{\,\mu}}{\mu (\, z_{\,2}\,)}$  , \; $\;
\mu\,(\,z_{\,1}\,) ,  \mu\,(\,z_{\,2}\,) , \mu\,(\,z_{\,1} \;+\;
z_{\,2}\,) \; \, \neq \; \, 0$ \\\\ $(\,iii\,)$ \hspace{1.5cm}
$\frac{\overline{(\,z_{\,1} \;-\; z_{\,2}\,)}_{\,\mu}}{\mu
(\,z_{\,1} \;-\; z_{\,2}\,)} \;\;=\;\; \frac{\overline{(\,z_{\,1}
\,)}_{\,\mu}}{\mu (\,z_{\,1}\,)} \;-\; \frac{\overline{(\,
z_{\,2}\,)}_{\,\mu}}{\mu (\, z_{\,2}\,)}$ , \; $\;
\mu\,(\,z_{\,1}\,) ,  \mu\,(\,z_{\,2}\,) , \mu\,(\,z_{\,1} \;-\;
z_{\,2}\,) \; \, \neq \; \, 0$ \\\\
$(\,iv\,)$ \hspace{1.5cm} $\frac{\overline{(\,z_{\,1} \;
z_{\,2}\,)}_{\,\mu}}{\mu (\,z_{\,1} \; z_{\,2}\,)} \;\;=\;\;
\frac{\overline{(\,z_{\,1} \,)}_{\,\mu}}{\mu (\,z_{\,1}\,)} \;\;
\frac{\overline{(\, z_{\,2}\,)}_{\,\mu}}{\mu (\, z_{\,2}\,)}$ , \;
$\; \mu\,(\,z_{\,1}\,) ,  \mu\,(\,z_{\,2}\,) , \mu\,(\,z_{\,1} \;
z_{\,2}\,) \; \, \neq \; \, 0$ \\\\
$(\,v\,)$ \hspace{1.5cm} $\frac{\overline{(\,z_{\,1} \;/\;
z_{\,2}\,)}_{\,\mu}}{\mu (\,z_{\,1} \;/\; z_{\,2}\,)} \;\;=\;\;
\frac{\overline{(\,z_{\,1} \,)}_{\,\mu}}{\overline{(\,
z_{\,2}\,)}_{\,\mu}} \;\; \frac{\mu (\, z_{\,2}\,)}{\mu (\,
z_{\,1}\,)}$ , \; $\; \mu\,(\,z_{\,1}\,) ,  \mu\,(\,z_{\,2}\,) ,
\mu\,(\,z_{\,1} \;/\; z_{\,2}\,) \; \, \neq \; \, 0$ \\\\ $(\,vi\,)$
\hspace{1.5cm} $z\,\mu (\,z\,) \;+\;
\overline{z}_{\,\mu} \;\,=\;\, 2\,Re (\,z\,)\;\mu (\,z\,)$ \\\\
$(\,vii\,)$ \hspace{1.5cm} $z\,\mu (\,z\,) \;-\;
\overline{z}_{\,\mu} \;\,=\;\, 2\,Im (\,z\,)\;\mu (\,z\,)$
\end{property}
\medskip
\begin{definition}
Let \, $z \;\,\varepsilon\;\, \mathbb{C_{\,\mu}}$ . $\mu\,-$ modulus
of $z$ is denoted by \, $|\,z\,|_{\,\mu}$ and defined by \,
$|\,z\,|_{\,\mu} \;=\; |\,z\,|\,\mu (\,z\,)$ , \, where \, $|\,z\,|$
\, is the usual modulus of $z$.
\end{definition}
\medskip
\begin{property}
$(\,i\,)$ \hspace{1.0cm} $\frac{|\,z_{\,1} \;
z_{\,2}\,|_{\,\mu}}{\mu (\,z_{\,1} \; z_{\,2}\,)} \;\;=\;\;
\frac{|\,z_{\,1} \,|_{\,\mu}}{\mu (\,z_{\,1}\,)} \;\; \frac{|\,
z_{\,2}\,|_{\,\mu}}{\mu (\, z_{\,2}\,)}$ , \; $\; \mu\,(\,z_{\,1}\,)
,  \mu\,(\,z_{\,2}\,) , \mu\,(\,z_{\,1} \; z_{\,2}\,) \; \, \neq \;
\, 0$ \\\\ $(\,ii\,)$ \hspace{1.5cm} $\frac{|\,z_{\,1} \;+\;
z_{\,2}\,|_{\,\mu}}{\mu (\,z_{\,1} \;+\; z_{\,2}\,)} \;\;\leq\;\;
\frac{|\,z_{\,1} \,|_{\,\mu}}{\mu (\,z_{\,1}\,)} \;\;+\;\; \frac{|\,
z_{\,2}\,|_{\,\mu}}{\mu (\, z_{\,2}\,)}$ , \; $\; \mu\,(\,z_{\,1}\,)
,  \mu\,(\,z_{\,2}\,) , \mu\,(\,z_{\,1} \;+\; z_{\,2}\,) \; \, \neq
\; \, 0$ \\\\ $(\,iii\,)$ \hspace{1.5cm} $\frac{|\,z_{\,1} \;/\;
z_{\,2}\,|_{\,\mu}}{\mu (\,z_{\,1} \;/\; z_{\,2}\,)} \;\;=\;\;
\frac{|\,z_{\,1} \,|_{\,\mu}}{|\, z_{\,2}\,|_{\,\mu}} \;\; \frac{\mu
(\, z_{\,2}\,)}{\mu (\, z_{\,1}\,)}$ , \; $\; \mu\,(\,z_{\,1}\,) ,
\mu\,(\,z_{\,2}\,) , \mu\,(\,z_{\,1} \;/\; z_{\,2}\,) \; \, \neq \;
\, 0$ \\\\ $(\,iv\,)$ \hspace{1.5cm}
$|\,\overline{z}_{\,\mu}\,| \;=\; |\,z\,|\,\mu (\,z\,)$ \\\\
$(\,v\,)$ \hspace{1.5cm} $\frac{|\,z_{\,1} \;-\;
z_{\,2}\,|_{\,\mu}}{\mu (\,z_{\,1} \;-\; z_{\,2}\,)} \;\;\geq\;\;
\frac{|\,z_{\,1} \,|_{\,\mu}}{\mu (\,z_{\,1}\,)} \;\;-\;\; \frac{|\,
z_{\,2}\,|_{\,\mu}}{\mu (\, z_{\,2}\,)}$ , \; $\; \mu\,(\,z_{\,1}\,)
,  \mu\,(\,z_{\,2}\,) , \mu\,(\,z_{\,1} \;-\; z_{\,2}\,) \; \, \neq
\; \, 0$ \\\\ $(\,vi\,)$ \hspace{1.5cm} $|\,z\,|_{\,\mu}
\;\,\geq\;\, Re (\,z\,)\;\mu (\,z\,)$ \,and\, $|\,z\,|_{\,\mu}
\;\,\geq\;\, Im (\,z\,)\;\mu (\,z\,)$ \\\\ $(\,vii\,)$
\hspace{1.5cm} $z\;\overline{z}_{\,\mu} \;\,=\,\;
z\;\overline{z}\,\mu (\,z\,) \;\,=\;\, |\,z\,|^{\,2}\;\mu (\,z\,)$
\end{property}
\medskip
\begin{definition}
Let \, $z \;\,\varepsilon\;\, \mathbb{C_{\,\mu}}$ . $\mu\,-$
argument of $z$ is denoted by \, $\arg_{\,\mu} (\,z\,)$ \, and
defined by \, $\arg_{\,\mu} (\,z\,) \;\,=\;\, \arg
(\,z\,)\,\mu(\,z\,)$ , \; $-\,\pi \;<\; \arg (\,z\,) \;\leq\; \pi$ .
\end{definition}
\medskip
\begin{property}
$\frac{\arg_{\,\mu}
(\,z_{\,1}\,z_{\,2}\,)}{\mu\,(\,z_{\,1}\,z_{\,2}\,)} \;\;=\;\;
\frac{\arg_{\,\mu} (\,z_{\,1}\,)}{\mu\,(\,z_{\,1}\,)} \;+\;
\frac{\arg_{\,\mu} (\,z_{\,2}\,)}{\mu\,(\,z_{\,2}\,)} \;+\;
2\,k\,\pi$ , where \, $k \;=\; 0$ \, if \, $-\,\pi \;<\;
\arg\,(\,z_{\,1}\,) \;+\; \arg\,(\,z_{\,2}\,) \;\leq\; \pi$ ; \, $k
\;=\; 1$ \, if \, $\arg\,(\,z_{\,1}\,) \;+\; \arg\,(\,z_{\,2}\,)
\;\leq\; -\,\pi$ ; \, $k \;=\; -\,1$ \, if \, $\arg\,(\,z_{\,1}\,)
\;+\; \arg\,(\,z_{\,2}\,) \;>\; \pi$  , \; $\; \mu\,(\,z_{\,1}\,)
\,,\, \mu\,(\,z_{\,2}\,) \,,\, \mu\,(\,z_{\,1} \; z_{\,2}\,) \; \,
\neq \; \, 0$ .
\end{property}
\medskip
\begin{definition}
Let \, $z \;\,\varepsilon\;\, \mathbb{C_{\,\mu}}$ . Then \,
$\exp_{\,\mu} (\,z\,) \;\,=\;\, \exp (\,z\,)\;\mu (\,z\,)$ , where
\, $\exp (\,z\,)$ \, denotes the usual exponential function in \,
$\mathbb{C}$ .
\end{definition}
\medskip
\begin{property}
$(\,i\,)$ \hspace{0.7cm} $\frac{\exp_{\,\mu} (\,z_{\,1} \;+\;
z_{\,2}\,)}{\mu (\,z_{\,1} \;+\; z_{\,2}\,)} \;\;\,=\;\;\,
\frac{\exp_{\,\mu} (\,z_{\,1} \,)}{\mu (\,z_{\,1}\,)} \;\,\;
\frac{\exp_{\,\mu} (\, z_{\,2}\,)}{\mu (\, z_{\,2}\,)}$ \; provided
\; $\; \mu\,(\,z_{\,1}\,) ,  \mu\,(\,z_{\,2}\,) ,\\ \mu\,(\,z_{\,1}
\;+\; z_{\,2}\,) \;\; \neq \;\;  0$ \\\\
$(\,ii\,)$ \hspace{1.5cm} $\frac{\exp_{\,\mu} (\,z_{\,1} \;-\;
z_{\,2}\,)}{\mu (\,z_{\,1} \;-\; z_{\,2}\,)} \;\;\,=\;\;\,
\frac{\exp_{\,\mu} (\,z_{\,1} \,)}{\exp_{\,\mu} (\, z_{\,2}\,)}
\;\,\; \frac{\mu (\, z_{\,2}\,)}{\mu (\, z_{\,1}\,)}$ , \; $\;
\mu\,(\,z_{\,1}\,) ,  \mu\,(\,z_{\,2}\,) , \mu\,(\,z_{\,1} \;-\;
z_{\,2}\,) \; \, \neq \; \, 0$
\end{property}
\medskip
\begin{note}
$\exp_{\,\mu} (\,0\,) \;\,=\;\, \exp (\,0\,)\;\mu (\,0\,) \;\,=\;\,
1$
\end{note}
\medskip
\begin{note}
$(\;\frac{exp_{\,\mu} (\,z\,)}{\mu (\,z\,)}\;)^{\,n} \;\;=\;\;
\frac{exp_{\,\mu} (\,n\,z\,)}{\mu (\,n\,z\,)}$ , \; $\mu\,(\,z\,)
\,,\, \mu\,(\,n\,z\,) \;\neq\; 0$
\end{note}
\medskip
\begin{definition}
Let \, $z \;\,\varepsilon\;\, \mathbb{C_{\,\mu}}$ \, and \, $z
\;\neq\; 0$. $\mu\,-$ logarithm of $z$ is denoted by \, $Log_{\,\mu}
(\,z\,)$ \, and defined by \, $Log_{\,\mu} (\,z\,) \;\,=\;\,
Log\,(\,z\,)\,\mu(\,z\,)$ .
\end{definition}
\medskip
\begin{property}
$(\,i\,)$ \hspace{0.7cm}
$\frac{Log_{\,\mu}\,(\,z_{\,1}\,z_{\,2}\,)}{\mu\,(\,z_{\,1}\,z_{\,2}\,)}
\;\;=\;\; \frac{Log_{\,\mu}\,(\,z_{\,1}\,)}{\mu\,(\,z_{\,1}\,)}
\;+\; \frac{Log_{\,\mu}\,(\,z_{\,2}\,)}{\mu\,(\,z_{\,2}\,)}$  , \;
$\; \mu\,(\,z_{\,1}\,) \, , \,  \mu\,(\,z_{\,2}\,) , \\
\mu\,(\,z_{\,1} \;
z_{\,2}\,) \; \, \neq \; \, 0$ \\\\
$(\,ii\,)$ \hspace{1.0cm}
$\frac{Log_{\,\mu}\,(\,z_{\,1}\,/\,z_{\,2}\,)}{\mu\,(\,z_{\,1}\,/\,z_{\,2}\,)}
\;\;=\;\; \frac{Log_{\,\mu}\,(\,z_{\,1}\,)}{\mu\,(\,z_{\,1}\,)}
\;-\; \frac{Log_{\,\mu}\,(\,z_{\,2}\,)}{\mu\,(\,z_{\,2}\,)}$ , \;
$\; \mu\,(\,z_{\,1}\,) ,  \mu\,(\,z_{\,2}\,) , \mu\,(\,z_{\,1} \;/\;
z_{\,2}\,) \; \, \neq \; \, 0$
\end{property}
\medskip
\begin{definition}
Let \, $a \;\,\varepsilon\;\, \mathbb{C_{\,\mu}}$ \,,\, $a \;\neq\;
0$ \, and \, $z \;\,\varepsilon\;\, \mathbb{C_{\,\mu}}$. We now
define \, $a_{\,\mu}^{\,z}$ \, by \, $a_{\,\mu}^{\,z} \;\,=\;\,
a^{\,z}\;\mu\,(\,z\,) \;\,=\;\, \exp_{\,\mu}\,(\,z\,Log\,a\,)$ \,
and \, $p.v.(\,a_{\,\mu}^{\,z}\,) \;\;=\;\;
p.v.(\,a^{\,z}\,)\,\mu\,(\,z\,)$
\end{definition}
\medskip
\begin{property}
$(\,i\,)$ \hspace{1.5cm} $\frac{p.v.(\,a_{\,\mu}^{\,z_{\,1} \;+\;
z_{\,2}}\,)}{\mu\,(\,z_{\,1} \;+\; z_{\,2}\,)} \;\;=\;\;
\frac{p.v.(\,a_{\,\mu}^{\,z_{\,1}\,}\,)}{\mu\,(\,z_{\,1}\,)} \;+\;
\frac{p.v.(\,a_{\,\mu}^{\,z_{\,2}}\,)}{\mu\,(\,z_{\,2}\,)}$ , \; $\;
\mu\,(\,z_{\,1}\,) \, , \,  \mu\,(\,z_{\,2}\,) , \\\mu\,(\,z_{\,1}
\;+\;
z_{\,2}\,) \; \, \neq \; \, 0$ \\\\
$(\,ii\,)$ \hspace{1.5cm} $(\,a\,b\,)_{\,\mu}^{\,z}\,\mu\,(\,z\,)
\;\,=\;\, a_{\,\mu}^{\,z}\,b_{\,\mu}^{\,z}$
\end{property}
\medskip
\bigskip
\textbf{Conclusion \; :} \; In next research, we are trying to
established the concepts of a function in \, $\mathbb{R_{ \,\mu} }$
\, or \, $\mathbb{C_{ \,\mu} }$ , limit , continuity ,
differentiability etc. of such function . \\\\

\textbf{Reference \; :} \\\\ $[\,1\,]$ \hspace{0.5cm} A. Kauffman \,
$\&$ \, M. Gupta , \textit{Introduction to Fuzzy Arithmetic} , Van
Nostrand Rrinhold,
New York, 1991 . \\
$[\,2\,]$ \hspace{0.5cm} S. Nanda , \textit{Fuzzy Field and Fuzzy
Linear Space} , Fuzzy Sets and Systems 19 $(\,1986\,)$ 89 - 94. \\
$[\,3\,]$ \hspace{0.5cm} Zhang Guang - Quan , \textit{Fuzzy
Continuous Function and its Properties} , Fuzzy Sets and Systems 43
$(\,1991\,)$ 159 - 171. \\ $[\,4\,]$ \hspace{0.5cm} Congxin Wu and
Jiqing Qiu , \textit{Some remarks for fuzzy complex analysis} ,
Fuzzy Sets and Systems 106 $(\,1999\,)$ 231 - 238. \\ $[\,5\,]$
\hspace{0.5cm} J. J. Buckley , \textit{Fuzzy Complex Numbers} ,
Fuzzy Sets and Systems 33 $(\,1989\,)$ 333 - 345.\\\\

\end{document}